\def\C{\mathbb{C}}
\def\R{\mathbb{R}}
\def\Z{\mathbb{Z}}
\def\P{\mathbb{P}}
\def\cO{\mathcal{O}}
\def\cW{\mathcal{W}}
\def\cM{\mathcal{M}}
\def\cV{\mathcal{V}}
\def\Jac{\operatorname{Jac}}
\newtheorem{theorem}{Theorem}[section]
\newtheorem{lemma}[theorem]{Lemma}
\newtheorem{proposition}[theorem]{Proposition}
\newtheorem*{theorem*}{Theorem}
\theoremstyle{definition}
\newtheorem{example}[theorem]{Example}
\newenvironment{polynomial}
  {\par\vspace{\abovedisplayskip}%
   \setlength{\leftskip}{\parindent}%
   \setlength{\rightskip}{\leftskip}%
   \medmuskip=4mu plus 2mu minus 2mu
   \binoppenalty=0
   \noindent$\displaystyle}
  {$\par\vspace{\belowdisplayskip}}
\begin{document}
\title{Multi-Degrees in Polynomial Optimization}

\author{Kemal Rose\\ \small{ MPI MiS, Inselstrasse 22, 04103, Leipzig, Germany} 
\email{kemal.rose@mis.mpg.de}}
\date{\today}

\date{}

%

%
%


%
\maketitle              

\begin{abstract}
We study structured optimization problems with polynomial objective function and polynomial equality constraints. The structure comes from a multi-grading on the polynomial ring in several variables. For fixed multi-degrees we determine the generic number of complex critical points. This serves as a measure for the algebraic complexity of the optimization problem. We also discuss computation and certification methods coming from numerical nonlinear algebra.
\end{abstract}

\section{Introduction}

Consider the following optimization problem:
\begin{equation}
\begin{gathered}
    \operatorname{minimize} \; \ f_0(x), \ x \in \mathbb{R}^n \quad
    \\
     \text{subject to } f_1(x) = \cdots = f_m(x) = 0
\end{gathered}
\label{eq:rewardMaximization}
\end{equation}
Here $f_0, \dots, f_m$ are polynomials in the ring $\mathbb{R}[X]$ in 
$n$ variables, partitioned into $k$ subsets
\[
X = \bigcup_{i = 1}^k  X_i, \ X_i = ( x_{i, 1}, \dots, x_{i, n_i})
\]
with $n = n_1 + \cdots + n_k$.
To find the global optimizers of $\eqref{eq:rewardMaximization}$ we investigate the critical points of $f_0$ restricted to the feasible region.
Since each global optimizer is amongst these critical points,
it is an important problem in polynomial optimization to understand how they depend on the data
$f_0, \dots, f_m$.
Following \cite{algebraic_degree}
we call the number of complex critical points of \eqref{eq:rewardMaximization} the \emph{algebraic degree of optimization}.
These degrees constitute fundamental invariants of the variety
$\{f_1 = \cdots = f_m = 0\}$, including the \emph{euclidean distance} degree \cite{ED_deg}, the \emph{maximum likelihood degree} \cite{ML_deg},
and are still subjects of active research \cite{Kaie}.

\begin{example}[$n = 2, \ k = 1$]
The optimization problem \eqref{eq:rewardMaximization} with
\[
f_0 = x^2 + y^2, \ f_1 = -x^2 + y^2 + 1
\]
has two real critical points
displayed in figure \ref{fig:level_lines}.

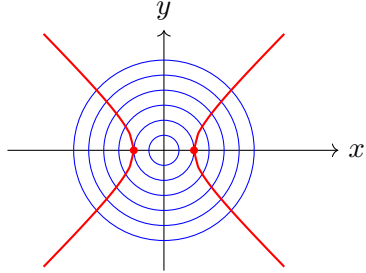
\begin{figure}[H]

\begin{center}

\begin{tikzpicture}[scale = 0.8]
  \draw[->] (-2.6, 0) -- (2.9, 0) node[right] {$x$};
  \draw[->] (0, -2) -- (0, 2) node[above] {$y$};

\draw[blue, thin] (0,0) circle (0.25 cm);
\draw[blue, thin] (0,0) circle (0.5 cm);
\draw[blue, thin] (0,0) circle (0.75 cm);
\draw[blue, thin] (0,0) circle (1 cm);
\draw[blue, thin] (0,0) circle (1.25 cm);
\draw[blue, thin] (0,0) circle (1.5 cm);

  \draw[scale=0.5, domain=1:4, smooth, variable=\x, red, thick]  plot ({\x}, {  sqrt(\x^2 - 1) });
  \draw[scale=0.5, domain=1:4, smooth, variable=\x, red, thick]  plot ({\x}, {  -sqrt(\x^2 - 1) });

  \draw[scale=0.5, domain=1:4, smooth, variable=\x, red, thick]  plot ({-\x}, {  sqrt(\x^2 - 1) });
  \draw[scale=0.5, domain=1:4, smooth, variable=\x, red, thick]  plot ({-\x}, {  -sqrt(\x^2 - 1) });
  
    \node[circle,fill=red,inner sep=0pt,minimum size=3pt,label=below:{}] (a) at (0.5,0) {};
    \node[circle,fill=red,inner sep=0pt,minimum size=3pt,label=below:{}] (a) at (-0.5,0) {};

\end{tikzpicture}
\caption{Level lines}
\label{fig:level_lines}

\end{center}
\end{figure}

\end{example}
In \cite{algebraic_degree} the algebraic degree of optimization has been 
studied not for a specific choice of $f_0$, but instead it is
determined for a generic choice of polynomials of fixed degrees.
To weaken this restrictive genericity assumption it is desirable to replace the degree of a polynomial with a more refined notion. In this spirit we denote for every polynomial $f$ in $\mathbb{R}[X]$ by $\operatorname{deg}(f) \in \mathbb{Z}_{\geq 0}^k$
its multi-degree. This is the vector of its degrees when considered as a polynomial in the $k$ different sets of variables $X_i$.

The paper is structured as follows.
In Section \ref{sec:critical_points} we give homogeneous equations for the critical points.
In Section $3$ we
determine the algebraic degree of optimization for a generic choice of polynomials
$f_0, \dots, f_m$ with fixed multi-degrees, using methods from intersection theory.
Our main result is Theorem \ref{thm: algebraic_degree_of_optimization}.
In the final section, Section  \ref{sec: numerical_nonlinear_algebra}, we discuss how numerical methods
and bounds for the algebraic degree of optimization can be used in conjunction, in order to
obtain optimizers of \eqref{eq:rewardMaximization} together with a certificate for the correctness of the result.
This uses an implementation of homotopy continuation \cite{HC.jl}, together with a certification method that we implemented,  based on interval arithmetic \cite{interval_certification}.

\section{Critical point equations}
\label{sec:critical_points}
From now on the polynomials $f_i$ are generic polynomials with fixed, positive multidegrees $\deg(f_i) \in \Z_{\geq 0}^k$.
A standard way of
solving \eqref{eq:rewardMaximization} is to apply Lagrange multipliers.
In particular, the constrained optimization problem \eqref{eq:rewardMaximization}
is replaced with the unconstrained problem
\begin{equation}
\label{eq:Lagrangian}
\nabla \mathcal{L} = 0,
\end{equation}
where $\nabla \mathcal{L}$ is the gradient of the Lagrangian
\[\mathcal{L} =  f_0 - \lambda_1f_1 -  \dots - \lambda_mf_m.\]

For the purposes of section \ref{sec: counting_pts} we will also consider the following alternative formulation.
A smooth point $x$ on the affine variety
\[
V := \{ x \in \C^n | \  f_1(x) = \cdots = f_m(x) = 0   \}
\]
is a critical point of $f_0$ if the rank of the Jacobian
\[
M := \Jac(f_0,f_1,  \dots, f_m) = (\nabla f_0, \nabla f_1,	\dots, \nabla f_m)
\]
drops at $x$.
Note that, since all multi-degrees are strictly positive,
by Bertini's theorem the variety $V$ is smooth and of codimension $m$, so the Jacobian
$(\nabla f_1, \dots, \nabla f_m)$ has full rank at every point in $V$.
Let now $W$ denote the affine variety determined by the vanishing of the maximal minors of $M$.

With the aim of computing the cardinality of the intersection $V \cap W$ using methods from intersection theory, we consider the closure $\cV$ of $V$ and $\cW$ of $W$ in the product of projective spaces
\[
\mathcal{ X}  = \P^{n_1}  \times \cdots \times \P^{n_k}.
\]
We start by giving homogeneous defining equations:
denote for every polynomial $f$ in $\R[x]$ by $\tilde{f}$ its multihomogenization 
\[
\tilde{f} = x_{1, 0}^{d_1} \cdots x_{k, 0}^{d_k} f \left( \frac{X_1}{x_{1, 0}}, \dots, \frac{X_k}{x_{k, 0}} \right),
\]
where the multi-degree of $f$ is $(d_1, \dots, d_k)$.
The vanishing locus $\cV$ of $\tilde{f_1}, \dots, \tilde{f_m}$ is the Zariski-closure of $V$ in $\mathcal{X}$.
Analogously, let
$\cM$ denote the $n \times m+1$ matrix
\begin{equation*}
\cM = 
\begin{pmatrix}
\frac{\partial}{\partial{x_{1,1}}} \tilde{f}_0 &  \cdots & \frac{\partial}{\partial{x_{1,1}}} \tilde{f}_m \\
\vdots    & \ddots & \vdots  \\
\frac{\partial}{\partial{x_{1, n_1}}} \tilde{f}_0  & \cdots &\frac{\partial}{\partial{x_{1, n_1}}} \tilde{f}_m\\
\vdots    & \vdots & \vdots  \\
\frac{\partial}{\partial{x_{k,1}}} \tilde{f}_0 &  \cdots & \frac{\partial}{\partial{x_{k,1}}} \tilde{f}_m \\
\vdots    & \ddots & \vdots  \\
\frac{\partial}{\partial{x_{k, n_k}}} \tilde{f}_0  & \cdots &\frac{\partial}{\partial{x_{k, n_k}}} \tilde{f}_m
\end{pmatrix}.
\end{equation*}
The $m+1$ by $m+1$ minors of $\cM$ 
are the homogenizations of the minors of $M$.
Their vanishing locus is 
the closure $\cW$ of $W$ in $\mathcal{X}$.

\section{Counting critical points}
\label{sec: counting_pts}
Our first observation is the following:
\begin{proposition}
The intersection $\cV \cap \cW$ is finite.
\end{proposition}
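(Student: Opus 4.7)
My goal is to show that $\dim(\cV \cap \cW) = 0$; finiteness then follows from the properness of $\cX$. I would stratify $\cX$ into the locally closed subsets
\[
X_I = \{x \in \cX : x_{i,0} = 0 \iff i \in I\}, \qquad I \subseteq \{1,\dots,k\},
\]
of dimension $n - |I|$, and check the dimension bound stratum by stratum. On the open stratum $X_\emptyset$ I recognise $\cV \cap \cW$ as a classical isolated-critical-point set, while on each boundary stratum I argue emptiness via a Bertini-style dimension count.

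On the affine chart $U_0 = X_\emptyset \cong \C^n$, the intersection $\cV \cap \cW \cap U_0$ coincides with $V \cap W$, the set of critical points of $f_0|_V$. Strict positivity of the multidegrees, together with Bertini's theorem, makes $V$ smooth of codimension $m$; genericity of $f_0$ then forces $f_0|_V$ to be a Morse function, so its critical set is $0$-dimensional. This handles the affine part.

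For a boundary stratum $X_I$ with $|I| \geq 1$, the restriction $\tilde f_j|_{X_I}$ equals, up to a nowhere-zero monomial factor in the $x_{i,0}$ with $i \notin I$, the leading form of $f_j$ in the variables $(X_i)_{i \in I}$. For generic $f_j$, this leading form is itself generic in its multigraded piece, so a second application of Bertini yields that $\cV \cap X_I$ is smooth of codimension $m$ in $X_I$. The entries of $\cM|_{X_I}$ are likewise generic in their graded pieces, hence the rank-$\leq m$ locus of $\cM|_{X_I}$ has the expected codimension $n-m$ in $X_I$. Transversality of these two generic subvarieties then gives
\[
\dim\bigl(\cV \cap \cW \cap X_I\bigr) \leq (n - |I| - m) + (m - |I|) - (n - |I|) = -|I|,
\]
so this intersection is empty for $|I| \geq 1$. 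Combining, $\cV \cap \cW = V \cap W$ is finite.

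The main obstacle is the codimension estimate for $\cW$ along the boundary strata: the entries of $\cM$ are partial derivatives of a single multihomogeneous polynomial, hence not algebraically independent, and the classical generic-determinantal bound cannot be cited directly. I would deal with this by an incidence-variety argument over the space of coefficient tuples of $f_0, \dots, f_m$, applying generic smoothness to the projection onto coefficients. Strict positivity of the multidegrees is essential here, since it ensures that the coefficient-to-partial-derivative map is surjective on each relevant graded piece, allowing the degeneracy locus of $\cM|_{X_I}$ to inherit the expected codimension from the generic-matrix model.
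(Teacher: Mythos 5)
Your route is genuinely different from the paper's: you stratify $\mathcal{X}$ by which homogenizing coordinates vanish and run a dimension count on each stratum, whereas the paper argues in one stroke that a curve in $\cV\cap\cW$ would have to meet the ample divisor $\{\tilde f_0=0\}$, and that such a point would be a singular point of the Bertini-smooth complete intersection $\cV\cap\{\tilde f_0=0\}$ because the Euler relation identifies the rank of $\cM$ with that of the full Jacobian $\Jac(\tilde f_0,\dots,\tilde f_m)$ off the coordinate hyperplanes. Your treatment of the open stratum is fine. But the boundary-stratum step has a genuine gap, and it sits exactly where you flagged the "main obstacle."

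The problem is that $\cV\cap X_I$ and $\cW\cap X_I$ are not independently generic, and the correlation between them is not a technicality: it is forced by the Euler relation. For $i\in I$ one has $\sum_{l=1}^{n_i} x_{i,l}\,\partial_{x_{i,l}}\tilde f_j = \deg(f_j)_i\,\tilde f_j$ on $\{x_{i,0}=0\}$, so at every point of $\cV\cap X_I$ the columns $j=1,\dots,m$ of $\cM$ are confined to a fixed subspace $H_x\subset\C^n$ of codimension $|I|$. Consequently your final claim — that the coefficient-to-partial-derivative map is surjective on each relevant graded piece, so that the degeneracy locus inherits the expected codimension from the generic-matrix model — is false along $\cV\cap X_I$: once the conditions $\tilde f_j(x)=0$ are imposed, the image of that map is the proper subspace $\C^n\times H_x^{\,m}$, and inside this constrained matrix space the rank-$\le m$ locus has codimension only $n-m-(|I|-1)$ (already for $|I|=2$ the locus where the $m$ constrained columns fail to be independent has codimension $n-|I|-m+1<n-m$). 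Your displayed bound $\dim(\cV\cap\cW\cap X_I)\le -|I|$ therefore rests on an unjustified transversality assertion and overstates the codimension by $|I|-1$.

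The conclusion can be rescued, but only by redoing the incidence count with the Euler constraint built in: over a generic $x\in X_I$ the fibre of the incidence variety in coefficient space has codimension at least $m+(n-m-|I|+1)=n-|I|+1$, so the incidence variety has dimension at most $(n-|I|)+(N-n+|I|-1)=N-1$, where $N$ is the dimension of the coefficient space; hence the generic fibre over coefficients is still empty, with a margin of $1$ rather than $|I|$. You should either carry out this corrected count (including the surjectivity of the coefficient map onto $\C\times H_x$ per column, which does use positivity of the multidegrees) or abandon the stratification in favour of the paper's shorter argument, which uses the Euler relation once, positively, instead of having to fight it on every boundary stratum.
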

\begin{proof}
Assume that there is a curve $C$ contained in 
$\cV \cap \cW$. Then $C$ intersects the vanishing locus of $\tilde{f_0}$. Equivalently there is a point $x$ contained in
$\cV \cap \{\tilde{f}_0 = 0\}$ where the maximal minors of $\cM$ vanish.
Since by Bertini's Theorem the intersection
$\cV \cap \{ \tilde{f}_0 = 0\}$ is a smooth variety,
the maximal minors of the matrix 
\[
\Jac(\tilde{f}_0, \dots, \tilde{f}_m)
 = 
\begin{pmatrix}
\frac{\partial}{\partial{x_{1,0}}} \tilde{f}_0 &  \cdots & \frac{\partial}{\partial{x_{1,0}}} \tilde{f}_m \\
\vdots    & \ddots & \vdots  \\
\frac{\partial}{\partial{x_{k, n_k}}} \tilde{f}_0  & \cdots &\frac{\partial}{\partial{x_{k, n_k}}} \tilde{f}_m
\end{pmatrix}
\]
do not vanish simultaneously.
This matrix differs from the matrix $\cW$ by some 
additional rows corresponding to the differentials
$\frac{\partial}{\partial{x_{1, 0}}}, \dots, \frac{\partial}{\partial{x_{k, 0}}}$.
However, as a consequence of the Euler relation, the rank of $\cM$ and $\Jac(\tilde{f}_0, \dots, \tilde{f}_m)$
coincide on $\cV$, away from $\cup_{i =1}^k\{x_{i,0} = 0\}$.
This gives us the desired contradiction.
\end{proof}

Applying a Bertini-type argument to \eqref{eq:Lagrangian} shows that the intersection $\cV \cap \cW$
is transversal, implying the following lemma:
\begin{lemma}
$f_0$ has finitely many critical points on $V$. Their number is
identified with the intersection product $[\cV] [\cW]$ in the cohomology ring
$A^*(  \mathcal{X}   ) = \Z[ Y_1, \dots, Y_k]/( Y_1^{n_1+1}, \dots, Y_k^{n_k+1}  )$.
\end{lemma}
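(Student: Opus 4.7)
I would begin by identifying the critical points of $f_0$ on $V$ with the set $V \cap W$: since all multi-degrees are strictly positive, Bertini ensures $V$ is smooth of codimension $m$, so $\nabla f_1(x), \dots, \nabla f_m(x)$ are linearly independent at every $x \in V$. Hence $x$ is a critical point of $f_0|_V$ if and only if $\nabla f_0(x)$ lies in their span, equivalently if and only if $M$ drops rank at $x$, i.e. $x \in V \cap W$. The inclusion $V \cap W \subseteq \cV \cap \cW$ combined with the preceding proposition then yields finiteness of the critical locus.

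For the intersection-product identity, I would introduce the incidence variety
\[
\mathcal{I} = \{ ((f_0, \dots, f_m), \lambda, x) \in P \times \C^m \times \mathcal{X} : \nabla \mathcal{L}(x) = 0 \},
\]
where $P$ is the affine space parameterizing coefficient tuples of multi-homogeneous polynomials with the prescribed multi-degrees, and verify that the differential of these defining equations, restricted to the $P$-directions, is surjective at every point of $\mathcal{I}$. This is plausible because at any $x$ with a nonzero coordinate in each block the coefficients of $f_0, \dots, f_m$ can be perturbed freely, using positivity of the multi-degrees to guarantee enough independent coefficient slots. Smoothness of $\mathcal{I}$ then lets Sard's theorem applied to the projection $\mathcal{I} \to P$ conclude that the generic fiber is smooth and zero-dimensional, so $\cV$ and $\cW$ meet transversally at every point of $\cV \cap \cW$ in $\mathcal{X}$. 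An analogous Bertini argument applied to the restriction to each boundary divisor $\{ x_{i,0} = 0 \}$ shows that for generic parameters no intersection point lies at infinity, hence $\cV \cap \cW = V \cap W$.

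With transversality and the absence of points at infinity in hand, the set-theoretic cardinality of $\cV \cap \cW$ equals the intersection number $[\cV] \cdot [\cW]$ computed in the cohomology ring of the smooth projective ambient $\mathcal{X}$, and this number is precisely the count of critical points of $f_0$ on $V$. The main obstacle is the surjectivity step in the Bertini argument: one must establish that perturbations of the coefficients of the $f_i$ produce arbitrary perturbations of $\nabla \mathcal{L}$ at a prospective critical point, treating the Lagrange multipliers $\lambda_i$ as additional unknowns rather than parameters. Carrying this out amounts to a concrete rank computation on the matrix of partials of $\nabla \mathcal{L}$ with respect to the coefficients, where positivity of every multi-degree is used essentially to exhibit enough independent coefficient directions, and the same argument must be re-run on each boundary stratum to rule out points at infinity.
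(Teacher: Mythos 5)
Your proposal is correct and follows essentially the same route as the paper, which simply invokes a Bertini-type argument applied to the Lagrangian system \eqref{eq:Lagrangian} to obtain transversality of $\cV \cap \cW$ and combines this with the finiteness statement of the preceding proposition. Your write-up is in fact more detailed than the paper's one-sentence justification, notably in making explicit the step that no intersection points lie on the boundary divisors $\{x_{i,0} = 0\}$, which is needed for the intersection product $[\cV][\cW]$ to count only the affine critical points.
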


As $\cV$ is a transversal intersection of generic global sections of the line bundles
$ \cO(\deg(f_j))$ on $\mathcal{X}$, its class in 
$A^*(\mathcal{X})$ is the product of Chern classes
\[
[\cV] = \prod_{j = 1}^m c_1( \cO(\deg(f_j))  ).
\]

We now compute the class 
$[ \cW ]$.
The matrix $\cM$ may be considered as a collection of linear maps parametrized by $\mathcal{X}$.
In fact, each block
$$
\begin{pmatrix}
\frac{\partial}{\partial{x_{i,1}}} \tilde{f}_0 &  \cdots & \frac{\partial}{\partial{x_{i,1}}} \tilde{f}_m \\
\vdots    & \ddots & \vdots  \\
\frac{\partial}{\partial{x_{i, n}}} \tilde{f}_0  & \cdots &\frac{\partial}{\partial{x_{i, n_1}}} \tilde{f}_m\\
\end{pmatrix}
$$
of $\cM$
has elements of multi-degree $\deg(f_j) - e_i$ in the $j$-th cloumn.
$e_i$ denotes the $i$-th vector of unity.
In particular it
determines a map of vector bundles
\[
\cO(e_i)  ^{\oplus n_i} \longrightarrow \bigoplus_{j = 0}^m \cO(\deg(f_j))
\]
on $\mathcal{X}$.
Here $\cO(e_i)$ is the $i-$th canonical line bundle on the product of projective spaces $\mathcal{X}$.
We transpose
$\cM$ to get a map of bundles
\[
\bigoplus_{j = 0}^m \cO(-\deg(f_j))
\longrightarrow 
\bigoplus_{i = 1}^k \cO(-e_i) ^{\oplus n_i}.
\]
On a smooth variety, and under the condition that the codimension of the degeneracy locus $\cW$ of a morphism $\cM$ of vector bundles is appropriate,
the Thom-Porteous-Giambelli formula \cite{3264original} applies to determine the classes $[\cW]$ in $A^*(\mathcal{ X})$.
For our choice of $\cM$ the formula identifies $[\cW]$ with the 
$n-m$-graded part of the quotient
\[
\frac{c(\bigoplus_{i = 1}^k \cO(-e_i) ^{\oplus n_i})}{c(\bigoplus_{j = 0}^m \cO(-\deg(f_j)))}
\]
of the total Chern classes 
of $\bigoplus_{i = 1}^k \cO(-e_i)^{ \oplus n_i}$ and 
$\bigoplus_{j = 0}^m \cO(-\deg(f_j))$ respectively.
Let $Y_i$ denote the $i-$th hyperplane class in $\mathcal{X}$.
Then  $c( \cO(e_i)) = 1 + Y_i$
and we obtain
\begin{lemma}
The class $[\cW]$ in $A^*(\mathcal{X}) $ is the
degree $n-m$ homogeneous part of the expression
\[
\Sigma :=
\frac{\prod_{i = 1}^k  ( 1-Y_i)^{n_i}  }{\prod_{j = 0}^m  1 - \sum_{i = 1}^k \deg(f_j)_i Y_i   . }
\]
\end{lemma}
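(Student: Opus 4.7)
The plan is to invoke the Thom-Porteous-Giambelli formula for the transposed map
\[
\varphi \colon E := \bigoplus_{j=0}^m \cO(-\deg(f_j)) \longrightarrow F := \bigoplus_{i=1}^k \cO(-e_i)^{\oplus n_i}
\]
identified in the paragraph preceding the lemma, and then to compute the relevant total Chern classes directly.

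First I would observe that $\cW$ is exactly the locus on which $\varphi$ has rank at most $m$. Since $\mathrm{rk}(E) = m+1 \le n = \mathrm{rk}(F)$, the expected codimension of this degeneracy locus is $(m+1-m)(n-m) = n-m$. Granted that $\cW$ actually attains this codimension, Thom-Porteous-Giambelli yields
\[
[\cW] \;=\; c_{n-m}(F - E) \;\in\; A^*(\mathcal{X}).
\]

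Next I would compute the total Chern classes explicitly. From $c_1(\cO(e_i)) = Y_i$ one obtains $c(\cO(-e_i)) = 1 - Y_i$, hence
\[
c(F) \;=\; \prod_{i=1}^k (1-Y_i)^{n_i}.
\]
Similarly $c_1(\cO(\deg(f_j))) = \sum_{i=1}^k \deg(f_j)_i\, Y_i$, and therefore
\[
c(E) \;=\; \prod_{j=0}^m \Bigl(1 - \sum_{i=1}^k \deg(f_j)_i\, Y_i\Bigr).
\]
Consequently $c(F-E) = c(F)/c(E)$ coincides with the expression $\Sigma$, and extracting its degree-$(n-m)$ component yields the formula claimed.

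The main obstacle is verifying the codimension hypothesis required by Thom-Porteous-Giambelli: one has to check that $\cW$ genuinely has codimension $n-m$ in $\mathcal{X}$, so that the cited formula computes the fundamental class of the degeneracy locus rather than an excess class. This should follow from a Bertini-type genericity argument applied to the generic polynomials $f_0, \dots, f_m$, in the same spirit as the argument in the preceding proposition, and is corroborated by the finiteness of $\cV \cap \cW$ together with $\mathrm{codim}\,\cV = m$. Once the codimension is in place, the remaining steps are purely formal Chern-class bookkeeping.
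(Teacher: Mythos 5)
Your proposal matches the paper's argument: the paper likewise transposes $\cM$ to a morphism $\bigoplus_{j=0}^m \cO(-\deg(f_j)) \to \bigoplus_{i=1}^k \cO(-e_i)^{\oplus n_i}$, applies the Thom--Porteous--Giambelli formula under the expected-codimension hypothesis, and reads off $[\cW]$ as the degree-$(n-m)$ part of the quotient of total Chern classes, which is exactly $\Sigma$ after substituting $c(\cO(-e_i)) = 1 - Y_i$ and $c(\cO(-\deg(f_j))) = 1 - \sum_i \deg(f_j)_i Y_i$. Your explicit attention to verifying the codimension condition is a point the paper leaves implicit, but the route is the same.
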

By abuse of notation we view $\Sigma$ as a power series in the formal variables $Y_1, \dots, Y_k$.
$\Sigma$ is a sum of homogeneous polynomials in $Y_1, \dots, Y_k$:
\[
\Sigma = \sum_{i = 0}^\infty \Sigma_i, \ \Sigma_i \in \Z[ Y_1, \dots, Y_k]_{(i)}.
\]
The two Lemmas above allow us to compute the algebraic degree of optimization:
\begin{theorem}
\label{thm: algebraic_degree_of_optimization}
The number of critical points of $f_0$ on the variety $V = V(f_1, \dots, f_m)$ is the coefficient of the monomial
$Y_1^{n_1}  \cdots Y_k^{n_k}$ in the product
\[
\Sigma_{n-m}
\prod_{j = 1}^m \left(1 -  \sum_{i = 1}^k \deg(f_j)_i Y_i   \right).
\]
\end{theorem}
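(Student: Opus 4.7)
My plan is to combine the two preceding lemmas and carry out a straightforward intersection-theoretic computation on the multi-projective space $\mathcal{X}$. The lemma equating the count with $[\cV]\cdot[\cW]$ tells me that the number of critical points is the degree of the zero-cycle $[\cV]\cdot[\cW] \in A^*(\mathcal{X})$. Since the top cohomology of $\mathcal{X}$ is one-dimensional and generated by the point class $Y_1^{n_1}\cdots Y_k^{n_k}$, this degree is exactly the coefficient of that monomial in a polynomial representative of $[\cV]\cdot[\cW]$, so my only task is to produce the right polynomial and extract the right coefficient.

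First I would compute $[\cV]$. Because the multi-homogenizations $\tilde f_j$ are generic global sections of the line bundles $\cO(\deg f_j)$, Bertini gives a transverse intersection, and the class factors as a product of first Chern classes:
\[
[\cV] = \prod_{j=1}^m c_1(\cO(\deg f_j)) = \prod_{j=1}^m \sum_{i=1}^k \deg(f_j)_i \, Y_i.
\]
Combined with $[\cW] = \Sigma_{n-m}$ from the preceding Thom--Porteous--Giambelli lemma, the product $\Sigma_{n-m}\cdot\prod_j L_j$ (where $L_j := \sum_i \deg(f_j)_i Y_i$) has total degree $n$ in the $Y_i$, and its coefficient of $Y_1^{n_1}\cdots Y_k^{n_k}$ is the algebraic degree.

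Finally, I would rewrite this coefficient in the form stated in the theorem by expanding
\[
\prod_{j=1}^m \left(1 - L_j\right) = \sum_{S \subseteq \{1,\dots,m\}} (-1)^{|S|} \prod_{j \in S} L_j,
\]
and observing that when this polynomial is multiplied by the homogeneous class $\Sigma_{n-m}$ of degree $n-m$, only the top-degree summand $|S|=m$ contributes to the coefficient of the degree-$n$ monomial $Y_1^{n_1}\cdots Y_k^{n_k}$; every other summand produces terms of strictly lower total degree and contributes nothing to that coefficient. This reduction isolates $\Sigma_{n-m}\cdot\prod_j L_j$ at the level of the coefficient extraction, matching the direct intersection product $[\cV]\cdot[\cW]$.

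The real mathematical content sits in the two preceding lemmas (Bertini transversality for $[\cV]$ and Thom--Porteous--Giambelli for $[\cW]$); the theorem itself is essentially a polynomial manipulation. The point requiring the most care, which I expect to be the main obstacle, is the grading bookkeeping in the last step: making sure that the degree-$m$ part of the expanded product $\prod_j(1-L_j)$ is the unique piece coupling nontrivially to $\Sigma_{n-m}$ to produce the top monomial, so that the formula in the theorem agrees with $[\cV]\cdot[\cW]$ with the correct sign and coefficient.
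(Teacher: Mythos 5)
Your argument is correct and is essentially the paper's own: the theorem is presented as the immediate combination of the two preceding lemmas, i.e.\ the degree of the zero-cycle $[\cV]\cdot[\cW]=\Sigma_{n-m}\prod_{j=1}^m c_1(\cO(\deg(f_j)))$ read off as the coefficient of $Y_1^{n_1}\cdots Y_k^{n_k}$. The sign subtlety you flag is real but harmless in the intended reading: the degree-$m$ part of $\prod_{j=1}^m(1-L_j)$ is $(-1)^m\prod_{j=1}^m L_j$, and the paper's own worked example multiplies by $(2X+2Y)^2=\prod_j L_j$ directly, so the factors $1-L_j$ in the statement act only as bookkeeping whose lower-order terms (and overall sign when $m$ is odd) do not affect the count.
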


\begin{example}
\label{ex: cubic_and_conics}
Consider the problem of optimizing a quadratic function $f_0$ over two quartic constraints $f_1, f_2$ in 4 variables $x_1, x_2, y_1, y_2$.
For a generic choice of such functions the number of critical points is $544$ \cite[Theorem 2.2]{algebraic_degree}. Compare this to the 
the case where $f_0$ is bilinear and both equality constraints are biquadratic. 
To apply theorem \ref{thm: algebraic_degree_of_optimization} let $X$ and $Y$ denote two formal variables.
First we expand the power series
\[
\frac{   (1-X)^2(1-Y)^2  }{    (1-(X+Y))(1-2(X+Y))^2        }
\]
and compute the degree $2$ homogeneous part $8X^2  + 18XY + 8Y^2$.
The algebraic degree of optimization is the coefficient $208$ of the monomial $X^2Y^2$
in 
$\left(2X + 2Y \right)^2 
\left( 8X  + 18XY + 8Y \right).
$
As we will verify below,  the bound $208$ is sharp.

\section{Numerical nonlinear algebra}
\label{sec: numerical_nonlinear_algebra}

In the following we demonstrate how bounds on the algebraic degree of optimization can be used in conjunction with
numerical methods, in order to compute optimizers for $\eqref{eq:rewardMaximization}$ together with a certificate for the correctness of the result.
The basic idea is to solve the Lagrangian system \eqref{eq:Lagrangian} using numerical methods such as the software package  \texttt{HomotopyContinuation.jl}. Although in principle 
it is possible to find all solutions, since this method involves numerical computations we might fail to find all critical points.
There is however a certification method implemented. This feature gives \emph{lower bounds} on the number of obtained critical points.

\begin{example}
Coming back to example \ref{ex: cubic_and_conics} we consider an optimization problem with objective function of bidegree $(1, 1)$
and equality constraints of bidegree $(2, 2)$ in the varible groups $x_1, x_2$ and $y_1, y_2$:
\small
\begin{polynomial}
f_0 = 4y_2x_1+17y_2x_2+13x_1y_1+48x_2y_1-15y_2+6x_1+23x_2+38y_1+24\\
f_1 =  x_1^2y_1^2 +x_1^2y_2^2 + x_2^2y_1^2 + x_2^2y_2^2 + 2x_1^2 + 3x_2^2 + 5y_1^2 + 7y_2^2 - 100, \\
f_2 = 
 20y_2^2x_1^2-18y_2^2x_1x_2-5y_2^2x_2^2+23y_2x_1^2y_1+28y_2x_1x_2y_ +49y_2x_2^2y_1-8x_1^2y_1^2+46x_1x_2y_1^2
 +18x_2^2y_1^2+30y_2^2x_1+26y_2^2x_2-19y_2x_1^2 +19y_2x_1x_2-9y_2x_1y_1+38y_2x_2^2+25y_2x_2y_1-17x_1^2y_1
 +28x_1x_2y_1-2x_1y_1^2-20x_2^2y_1+23x_2y_1^2+35y_2^2+46y_2x_1+29y_2x_2+4y_2y_1+6x_1^2+42x_1x_2+32x_1y_1
 -12x_2^2-13x_2y_1+39y_1^2+31y_2+50x_1-7x_2+22y_1-3 .
\end{polynomial}
The optimization problem has exactly $208$ complex critical points, $22$ of which are real.

The following Julia code solves the Lagrangian formulation \eqref{eq:Lagrangian} of the optimization problem using the software package \texttt{HomotopyContinuation.jl}.
It finds 208 critical points, certifies that all of them are distinct and that exactly 22 of them are real. This shows that the upper bound given by Theorem \ref{thm: algebraic_degree_of_optimization}
is attained.
In particular, by comparing the values of $f_0$ on the real ones, the optimizer can be determined.
\begin{small}
\begin{verbatim}
@var x_1 x_2 y_1 y_2 l1 l2;
variables = [x_1, x_2, y_1, y_2, l1, l2];
f_0, f_1, f_2 = ...;
L = f0 - l1 * f1 - l2 * f2;
∇L = differentiate(L, variables);
result = solve(System(∇L));
certificate = certify(∇L, result);
reals = real_solutions(result);
obj_values =
map(s -> evaluate(f0, [x_1, x_2, y_1, y_2]
=> s[1:4]), reals);
minval, minindex = findmin(obj_values);
minarg = reals[minindex][1:4];
\end{verbatim}
\end{small}
The objective function attains its minimal value $-234.876$ at the point $ \left( 0.011, 0.019, -6.355, 0.774 \right)$.
\end{example}

\paragraph{Outlook}
Theorem \ref{thm: algebraic_degree_of_optimization} can be stated analogously when taking the closure of $V$ and $W$ not in a product of projective spaces, but any toric variety.
It is desirable to obtain a version of the theorem that holds for polynomials that are generic relative to their Newton polytopes or even generic relative to their supports.
In particular, it is a natural question to ask when the BKK bound of the Lagrangian formulation \eqref{eq:Lagrangian} coincides with the algebraic degree of polynomial optimization.
This is the subject of our next paper.

From a computational perspective it would be interesting to make
the main theorem \ref{thm: algebraic_degree_of_optimization} effective by finding, for prescribed multidegrees, an instance of the problem \eqref{eq:rewardMaximization}
for which all critical points can be easily computed. This would provide an effective way to solve $\eqref{eq:rewardMaximization}$
by the means of homotopy continuation.
\end{example}

 {\small
\bibliographystyle{alpha}
\bibliography{bibliography}
}


\end{document}